\numberwithin{equation}{section}
\newtheorem{thm}{Theorem}[section]
\newtheorem{cor}[thm]{Corollary}
\newtheorem{lem}[thm]{Lemma}
\newtheorem{prop}[thm]{Proposition}
\newtheorem{defn}[thm]{Definition}
\newtheorem{defns}[thm]{Definitions}
\newtheorem{exam}[thm]{Example}
\newtheorem{rem}[thm]{Remark}
\newtheorem{ques}[thm]{Question}
\newtheorem{defr}[thm]{Definition and Facts }
\DeclareMathOperator{\Supp}{Supp}
\DeclareMathOperator{\Spec}{Spec}
\DeclareMathOperator{\Ass}{Ass}
\newcommand{\Tot}{\mbox{Tot}\,}
\newcommand{\gr}{\mbox{grade}\,}
\def\id{\operatorname{\mathsf{id}}}
\def\Gid{\operatorname{\mathsf{Gid}}}
\def\Gfd{\operatorname{\mathsf{Gfd}}}
\def\Gpd{\operatorname{\mathsf{Gpd}}}
\def\GCpd{\operatorname{\mathsf{G_C-pd}}}
\def\GCfd{\operatorname{\mathsf{G_C-fd}}}
\newcommand{\xid}{\mbox{$\mathcal{X}$-id}\,}
\newcommand{\xpd}{\mbox{$\mathcal{X}$-pd}\,}
\def\pd{\operatorname{\mathsf{pd}}}
\def\fd{\operatorname{\mathsf{fd}}}
\def\cd{\operatorname{\mathsf{cd}}}
\def\cid{\operatorname{\mathsf{C-id}}}
\def\cpd{\operatorname{\mathsf{C-pd}}}
\def\T{\operatorname{\mathsf{T}}}
\def\cgid{\operatorname{\mathsf{G_{C}-id}}}
\newcommand{\h}{\mbox{ht}\,}
\newcommand{\E}{\mbox{E}}
\renewcommand{\H}{\mbox{H}}
\newcommand{\V}{\mbox{V}}
\newcommand{\fa}{\mathfrak{a}}
\newcommand{\fm}{\mathfrak{m}}
\newcommand{\fp}{\mathfrak{p}}
\newcommand{\fn}{\mathfrak{n}}
\newcommand{\C}{\mbox{C}}
\def\depth{\operatorname{\mathsf{depth}}}
\def\Hom{\operatorname{\mathsf{Hom}}}
\def\dim{\operatorname{\mathsf{dim}}}
\def\Ext{\operatorname{\mathsf{Ext}}}
\def\Tor{\operatorname{\mathsf{Tor}}}
\begin{document}
\title[flat and gorenstein flat dimensions]
 { on flat and gorenstein flat dimensions of local cohomology modules}

\bibliographystyle{99}

     \author[M.R. Zargar]{Majid Rahro Zargar }
     \author[H. Zakeri]{Hossein Zakeri }

\address{Majid Rahro Zargar\\ School of Mathematics, Institute for Research in Fundamental Sciences (IPM), P.O. Box: 19395-5746, Tehran, Iran.}
\email{zargar9077@gmail.com}
\address{Hossein Zakeri, Faculty of mathematical sciences and computer, Kharazmi
University, 599 Taleghani Avenue, Tehran 15618, Iran.}
\email{zakeri@khu.ac.ir}

\subjclass[2010]{13D05, 13D45, 18G20}
\keywords{Flat dimension, Gorenstein injective dimension, Gorenstein flat dimension, local cohomology, relative Cohen-Macaulay module, semidualizing module.}

\begin{abstract} Let $\fa$ be an ideal of a Noetherian local ring $R$ and let $C$ be a semidualizing $R$-module. For an $R$-module $X$, we denote any of the quantities $\fd_R X$, $\Gfd_R X$ and $\GCfd_RX$ by $\T(X)$. Let $M$ be an $R$-module such that $\H_{\fa}^i(M)=0$ for all $i\neq n$. It is proved that if $\T(X)<\infty$, then $\T(\H_{\fa}^n(M))\leq\T(M)+n$ and the equality holds whenever $M$ is finitely generated. With the aid of these results, among other things, we characterize Cohen-Macaulay modules, dualizing modules and Gorenstein rings.
\end{abstract}


\maketitle

\section{introduction}Throughout this paper, $R$ is a commutative Noetherian ring, $\fa$ is an ideal of $R$ and $M$ is an $R$-module. From section 3, we assume that $R$ is local with maximal ideal $\fm$. In this case, $\hat{R}$ denotes the $\fm$-adic completion of $R$ and $\E(R/\fm)$ denotes the injective hull of the residue field $R/\fm$. For each non-negative integer $i$, we use $\H_{\fa}^{i}(M)$ to denote the $i$-th local cohomology module of $M$ with respect to $\fa$ (see \cite{MB} for its definition and basic results). Also, we use $\id_{R}M$, $\pd_{R}M$ and $\fd_{R}M$, respectively, to denote the usual injective, projective and flat dimensions of $M$ respectively. The notions of Gorenstein injective, Gorenstein projective and Gorenstein flat, were introduced by Enochs and Jenda in \cite{EE}. Notice that, the classes of Gorenstein injective, Gorenstein projective and Gorenstein flat modules include, respectively, the classes of injective, projective and flat modules. Recently, the authors proved, in \cite[Theorem 2.5]{MZ}, that if $M$ is a certain module over a local ring $R$, then $\id_{R}M$ and $\id_{R}\H_{\fa}^{\small{\h_{M}\fa}}(M)$ are simultaneously finite and the equality $\id_{R}\H_{\fa}^{\footnotesize{\h_{M}\fa}}(M)=\id_{R}M-\h_{M}\fa$ holds. Also, a counterpart of this result was established in Gorenstein homological algebra. Indeed, it was proved that if $R$ has a dualizing complex and $\Gid_{R}M<\infty$, then the equality $\Gid_{R}\H_{\fa}^{\footnotesize{\h_{M}\fa}}(M)=\Gid_{R}M-\h_{M}\fa$ holds.

The principal aim of this paper is to study, in like manner, the flat (resp. Gorenstein flat) dimension of certain $R$-modules in terms of flat (resp. Gorenstein flat) dimension of their local cohomology modules.

The organization of this paper is as follows. As an our first main result, it is proved, in 3.2. that if $\H_{\fa}^i(M)=0$ for all $i\neq n$, then $\fd_{R}\H_{\fa}^n(M)\leq\fd_{R}M+n$ and the equality holds whenever $M$ is finitely generated. Next, using the above result, we prove, in 3.5, that a $d$-dimensional finitely generated $R$-module $M$ with finite projective dimension is Cohen-Macaulay if and only if $\fd_R\H_{\fm}^d(M)=\pd_RM+d$. Notice that this result recovers \cite[Corollary 9.5.22]{EE}. Propositions 3.7 and 3.9, which provide, respectively, characterization of dualizing modules and Gorenstein rings, recover some results that have been proved in \cite{MR} and \cite{MZ}. It is well known that a local ring $R$ is Cohen-Macaulay if it admits a finitely generated $R$-module $M$ with $\pd_RM<\infty$. In 3.10, we recover this result, by using the assumption $\cpd_{R}M<\infty$ instead of the assumption $\pd_R M<\infty$, where $C$ is a semidualizing $R$-module. Theorem 4.3, which is an another main result, provides a Gorenstein flat version of 3.2. Next, with the aid of this result, over a Cohen-Macaulay local ring, a Gorenstein flat version of 3.5 is established. Finally, again with the aid of 4.3, we obtain a $\GCfd$ version of 3.2. Indeed, we show that if $\H_{\fa}^i(M)=0$ for all $i\neq n$ and $C$ is a semidualizing $R$-module such that
 $\GCfd_{R}M<\infty$, then $\GCfd_{R}\H_{\fa}^{n}(M)\leq\GCfd_{R} M+n$ and the equality holds whenever $M$ is finitely generated. As a generalization of 3.5, this result provides a characterization of Cohen-Macaulay modules in terms of $\GCfd$ dimension of certain local cohomology modules.
\section{preliminaries}

In this section we recall some definitions and facts which are needed throughout this paper.
\begin{defn}\emph{Following \cite[Definition 2.1]{WY}, let $\mathcal{X}$ be a class of $R$-modules and let $M$ be an $R$-module. An $\mathcal{X}$-\textit{coresolution}
of $M$ is a complex of $R$-modules in $\mathcal{X}$ of the form $$X=0\longrightarrow X_{0}\stackrel{\partial_{0}^X} \longrightarrow X_{-1}\stackrel{\partial_{-1}^X}\longrightarrow \cdots\stackrel{\partial_{n+1}^X}\longrightarrow X_{n}\stackrel{\partial_{n}^X}\longrightarrow X_{n-1}\stackrel{\partial_{n-1}^X}\longrightarrow \cdots$$
such that $\H_{0}(X)\cong M$ and {$\H_{n}(X)=0$} for all $n\leq-1$. The $\mathcal{X}$-\textit{injective dimension} of
$M$ is the quantity
$${\xid_{R}M}=\inf\{ \sup \{-n\geq0 | X_{n}\neq0\} ~|~ X~\text{is an $\mathcal{X}$-coresolution of  $M$~}\}.$$
The modules of $\mathcal{X}$-injective dimension zero are precisely the non-zero modules of $\mathcal{X}$ and also $\xid_{R}0=-\infty$.}

\emph{Dually, an $\mathcal{X}$-\textit{resolution} and $\mathcal{X}$-\textit{projective dimension} of
$M$ is defined. We will use the notation $\xpd_{R}M$ to denote the $\mathcal{X}$-projective dimension of $M$.}

\emph{The following notion of semidualizing modules goes back at least to
Vasconcelos \cite{WV}, but was rediscovered by others. The reader is referred to \cite{W} for more details about semidualizing modules.}

\end{defn}
\begin{defn}\emph{A finitely generated $R$-module $C$ is called \textit{semidualizing} if
the natural homomorphism $R\rightarrow \Hom_{R}(C,C)$ is an isomorphism and
$\Ext_{R}^{i}(C,C)=0$ for all $i\geq1$. An $R$-module $D$ is said to be a \textit{dualizing} $R$-module if it is semidualizing and
has finite injective dimension. For a semidualizing $R$-module $C$, we set
{ \[\begin{array}{rl}
&\mathcal{I}_{C}(R)=\{~\Hom_{R}(C,I) | ~~ I  ~~ \text{is an injective $R$-module}\},\\
&\mathcal{P}_{C}(R)=\{~C\otimes_{R}P | ~~ P  ~~ \text{is a projective $R$-module}\},\\
&\mathcal{F}_{C}(R)=\{~C\otimes_{R}F | ~~ F  ~~ \text{is a flat $R$-module}\}.
\end{array}\]}
The $R$-modules in $\mathcal{I}_{C}(R)$, $\mathcal{P}_{C}(R)$ and $\mathcal{F}_{C}(R)$ are called $C$-injective, $C$-projective and $C$-flat, respectively.
For convenience the quantities $\mathcal{I_{C}}(R)$-$\id_{R}M$ and $\mathcal{P_{C}}(R)$-$\pd_{R}M$, which are defined as in 2.1, are denoted by $\cid_{R}M$ and $\cpd_{R}M$ respectively.
Notice that if $C=R$, then the above quantities are the usual injective and projective dimensions, respectively. }
\end{defn}

Based on the work of E.E. Enochs and O.M.G. Jenda \cite{EE}, the following notions were introduced
and studied by H. Holm and P. J${\o}$rgensen \cite{HJ}.
\begin{defns}\emph{Let $C$ be a semidualizing $R$-module. A complete $\mathcal{I}_{C}\mathcal{I}$-\textit{coresolution} is a complex $Y$ of $R$-modules such that
\begin{itemize}
\item[(i)]{$Y$ is exact and {$\Hom_{R}(I,Y)$} is exact for each $I\in \mathcal{I}_{C}(R)$, and that}
\item[(ii)]{ $Y_{i}\in \mathcal{I}_{C}(R)$ for all $i>0$ and $Y_{i}$ is injective for all $i\leq0$.}
\end{itemize}
An $R$-module $M$ is called $G_{C}$-\textit{injective} if there exists a complete $\mathcal{I}_{C}\mathcal{I}$-coresolution $Y$
such that $M\cong \ker(\partial_{0}^{Y})$. In this case $Y$ is a complete $\mathcal{I}_{C}\mathcal{I}$-coresolution of $M$. The class of $G_{C}$-injective $R$-modules is denoted by $\mathcal{GI_{C}}(R)$ and for convenience, the quantity $\mathcal{GI_{C}}(R)$-$\id_{R}M$ which is defined as in 2.1, is denoted by $\cgid_{R}M.$}

\emph{Dually, we can define the notions of $G_{C}$-\textit{projective} and $G_{C}$-\textit{flat} dimensions  for an $R$-module $M$ which are denoted by $\GCfd_R M$ and $\GCpd_R M$, respectively. For more details, the reader is refereed to \cite[Definiation 2.7]{HJ}. Note that when $C=R$, these notions are exactly the concepts of Gorenstein injective, Gorenstein projective and Gorenstein flat dimensions which were introduced in \cite{EE}.}
\end{defns}

\begin{defn} \emph{We say that a finitely generated $R$-module $M$ is \textit{relative Cohen–
Macaulay with respect to $\fa$} if there is precisely one non-vanishing local cohomology module of $M$ with
respect to $\fa$. Clearly this is the case if and only if $\gr(\fa,M)=\cd(\fa,M)$, where $\cd(\fa,M)$ is the largest integer $i$ for which $\H_{\fa}^i(M)\neq0$ and $\gr(\fa,M)$ is the least integer $i$ such that \emph{$\Ext_R^i({R/\fa},M)\neq0$}. Observe that the notion of relative Cohen-Macaulay module is connected with the notion of cohomologically complete intersection ideal which has been studied in \cite{HSH}.
}
\end{defn}
\begin{rem}\emph{Let $M$ be a relative Cohen-Macaulay module with respect to $\fa$ and let $\cd(\fa,M)=n$. Then, in view of \cite[theorems 6.1.4, 4.2.1, 4.3.2]{MB}, it is easy to see that $\Supp\H^{n}_{\fa}(M)=\Supp({M}/{\fa M})$ and $\h_{M}\fa=\gr(\fa,M)$, where
$\h_{M}\fa =\inf\{\ \dim_{R_{\fp}}M_{\fp} |~ \fp\in\Supp(M/\fa M) ~\}$.}
\end{rem}
Next, we recall some elementary results about the trivial extension of a ring by a module.

\begin{defr}\emph{Let $C$ be an $R$-module. Then the direct sum $R\oplus C$ has the structure of a commutative ring with respect to the multiplication defined by
$$(a,c)(a',c')=(aa', ac'+a'c),$$}
\emph{for all $(a,c),(a', c')$ of $ R\oplus C$. This ring is called the \textit{trivial extension} of $R$ by $C$ and is denoted by $R\ltimes C$. The following properties of $R\ltimes C$ are needed in this paper.}
\begin{itemize}
\item[(i)]\emph{{There are natural ring homomorphisms $R\rightleftarrows R\ltimes C$ which enable us to consider $R$-modules as $R\ltimes C$-modules, and vice versa.}}
\item[(ii)]\emph{{For any ideal $\fa$ of $R$, $\fa\oplus C$ is an ideal of $R\ltimes C$.}
\item[(iii)]{$(R\ltimes C, \fm\oplus C)$ is a Noetherian local ring whenever $(R,\fm)$ is a Noetherian local ring and $C$ is a finitely generated $R$-module. Also, in this case, $\dim R=\dim R\ltimes C$}.}
\end{itemize}
\end{defr}
The classes defined next is collectively known as Foxby classes. The reader is referred to \cite{W} for some basic results about those classes.
\begin{defn}\emph{Let $C$ be a semidualizing $R$-module. The \textit{Bass class} with respect to $C$ is the class $\mathcal{B_{C}}(R)$ of $R$-modules $M$ such that }
\begin{itemize}
\item[(i)]{\emph{$\Ext_{R}^i(C,M)=0=\Tor^{R}_{i}(C,\Hom_{R}(C,M))$ for all $i\geq1$}, and that}
\item[(ii)]{\emph{the natural evaluation map $C\otimes_{R}\Hom_{R}(C,M)\rightarrow M$ is an isomorphism}.}
\end{itemize}
\emph{Dually, the \textit{Auslander class} with respect to $C$, denoted by $\mathcal{A}_{C}(R)$, consists of
all $R$-modules $M$ such that}
\begin{itemize}
\item[(i)]{\emph{$\Tor^{R}_{i}(C,M)=0=\Ext_{R}^{i}(C,C\otimes_{R}M)$ for all $i\geq1$}, and that}
\item[(ii)]{\emph{the natural map $M\rightarrow \Hom_{R}(C,C\otimes_{R}M)$ is an isomorphism}.}
\end{itemize}
\end{defn}
\section{local cohomology and flat dimension}
The starting point of this section is the following proposition which plays an essential role in the present paper.
\begin{prop}Let $n$ and $s$ be non-negative integers and let $N$ be an $R$-module. Suppose that $\emph{H}^{i}_\fa(M)=0$ for all $i\neq n$. Then the following statements hold true.
\begin{itemize}
\item[(i)]{ If \emph{$\Tor_i^R(N,M)=0$} for all $i>s$, then \emph{$\Tor_i^R(N,\H_{\fa}^n(M))=0$} for all $i>s+n$.}
\item[(ii)]{If $N$ is $\fa$-torsion, then  \emph{$\Tor_i^R(N,\H_{\fa}^n(M))\cong\Tor_{i-n}^R(N,M)$ for all $i$.}}
\end{itemize}
\end{prop}
\begin{proof} (i): We may assume $\H^{n}_\fa(M)\neq0$. Let $c$ be the arithmetic rank of $\fa$. Then there exists a sequence $x_{1},\ldots,x_{c}$ of elements of $R$ such that $\sqrt{\fa} = \sqrt{(x_{1},\ldots,x_{c})}$. Let $C(R)^\bullet$ denotes the $\check{C}$ech
complex of $R$ with respect to $x_{1},\ldots, x_{c}$ and let $F_{\bullet}$ be a free resolution for $N$. For the first quadrant bicomplex $\mathcal{M}=\{ M_{p,q}=F_{p}\otimes_{R}M\otimes_{R}C_{c-q}\}$ we denote the total complex of $\mathcal{M}$ by $\Tot(\mathcal{M})$. Now, with the notation of \cite{JR}, $\E^1$ is the bigraded module whose $(p,q)$ term is $\H^{''}_{q}(M_{p,*})$, the q-th homology of the p-th column. Since $F_{p}$ is flat, by assumption we have
\[ ^{I}E_{p,q}^{1}=\H^{''}_{q}(M_{p,*})=\begin{cases}
       0 & \text{if $q\neq c-n$}\\
       F_{p}\otimes_{R}\H^n_{\fa}(M) & \text{if $q=c-n$},
       \end{cases} \]
therefore

       \[ ^{I}E_{p,q}^{2}=\H^{'}_{p}\H^{''}_{q}(\mathcal{M})=\begin{cases}
       0 & \text{if $q\neq c-n$}\\
       \Tor^{R}_{p}(N, \H^n_{\fa}(M)) & \text{if $q=c-n$};
       \end{cases} \]
       and hence the spectral sequence collapses. Note that, in view of \cite[Theorem 10.16]{JR} we have $^{I}E_{p,q}^{2}\underset{p}\Longrightarrow\H_{p+q}(\Tot(\mathcal{M}))$ for all $p,q$. Thus, for all $t=p+q$, there is the following filtration  $${0}=\Phi^{-1}H_{t}\subseteq\Phi^{0}H_{t}\subseteq\ldots\subseteq\Phi^{t-1}H_{t}\subseteq\Phi^{t}H_{t}=H_{t}$$
       such that $^{I}E_{p,q}^{\infty}\cong\Phi^{p}H_{t}/\Phi^{p-1}H_{t}$. Therefore, one can use the above filtration to see that \begin{equation}\Tor^{R}_{p}(N, \H^n_{\fa}(M))\cong\H_{p+c-n}(\Tot(\mathcal{M}))\end{equation} for all $p$.

       A similar argument applies to the second iterated homology, using the fact that each $C_{c-q}$ is flat, yields  { \[ ^{II}E_{p^{'},q^{'}}^{2}=\H^{''}_{p^{'}}\H^{'}_{q^{'}}(\mathcal{M})=\begin{cases}
       0 & \text{if $q^{'}> s$}\\
       \H_{\fa}^{c-p^{'}}(\Tor^{R}_{q^{'}}(N,M)) & \text{if $q^{'}\leq s.$}
       \end{cases} \]}

       Now, we claim that $^{II}E_{p^{'},q^{'}}^{\infty}=0$ for all $p^{'}, q^{'}$ such that $p^{'}+q^{'}=p+c-n$ and that $p>s+n$. To this end, first notice that, by \cite[Theorem 10.16]{JR}, we have $^{II}E_{p^{'},q^{'}}^{2}\underset{p_{'}}\Longrightarrow\H_{p^{'}+q^{'}}(\Tot(\mathcal{M}))$. If $q^{'}>s$, there is nothing to prove. Let $q^{'}\leq s$. Then $0>c-p^{'}$ and hence $^{II}E_{p^{'},q^{'}}^{2}=0$; which in turn yields $^{II}E_{p^{'},q^{'}}^{\infty}=0$. Now, by using a similar filtration as above, one can see that $\H_{p+c-n}(\Tot(\mathcal{M}))=0$  for all $p>s+n$. Therefore $\Tor^{R}_{p}(N, \H^n_{\fa}(M))=0$ for all $p>s+n$.

(ii): First, notice that $\Tor^{R}_{i}(N,M)$ is an $\fa$-torsion $R$-module for all $i$. Therefore, by using the same arguments as above, one can deduce that
\[ ^{II}E_{p^{'},q^{'}}^{2}=\H^{''}_{p^{'}}\H^{'}_{q^{'}}(\mathcal{M})=\begin{cases}
       0 & \text{if $p^{'}\neq c$}\\
       \Tor^{R}_{q^{'}}(N,M) & \text{if $p^{'}=c$.}
       \end{cases} \]
Thus, the spectral sequence collapses at the c-th column; and hence we get the isomorphism $\Tor^{R}_{q^{'}}(N, M)\cong\H_{q^{'}+c}(\Tot(\mathcal{M}))$ for all $q^{'}$. It therefore follows, by the isomorphism (3.1), that $$\Tor_{p}^{R}(N,\H_{\fa}^{n}(M))\cong\Tor_{p-n}^{R}(N,M)$$
for all $p$.\end{proof}

The following theorem, which is one of the main results of this section, provides a comparison between the flat dimensions of a relative Cohen-Macaulay  module and its non-zero local cohomology module. Here we adopt the convention that the flat dimension of the zero module is to be taken as $-\infty$.
\begin{thm}Let $n$ be a non-negative integer such that $\emph{H}^{i}_\fa(M)=0$ for all $i\neq n$. Then
\begin{itemize}
\item[(i)]{\emph{$\fd_{R}\H^{n}_\fa(M)\leq\fd_R M+n$, and}}
\item[(ii)]{the equality holds whenever $M$ is finitely generated.}
\end{itemize}
\begin{proof}(i) follows immediately from Proposition 3.1(i). It is well-known, see for example \cite[Theorem 8.27]{JR}, that $\pd_R M=\fd_R M$ whenever $M$ is finitely generated. Therefore, one can use \cite[Corollary 8.54]{JR} in conjunction with Proposition 3.1(ii) and the inequality (i) to establish the final assertion.
\end{proof}
\end{thm}
{Next, we provide an example to show that if $M$ is not finitely generated, then Theorem 3.2(ii) is no longer true.}
\begin{exam}\emph{Let $k$ be a field and let $R={k[[x,y,z]]}/{(x^2,xy)}$. Set $\fp=(x,y)R$. Notice that $R_{\fp}$ is not Gorenstein, $\fp\notin\V(zR)$ and $R$ is relative Cohen-Macaulay with respect to $zR$. Set $M=R\oplus\E(R/\fp)$. Now, since $R$ is a local ring with $\dim R =2$, $M$ is not finitely generated. Note that $\H_{zR}^{i}(M)=0$ for all $i\neq1$ and $\H_{zR}^{1}(M)\cong\H_{zR}^{1}(R)$. Therefore, one can use Theorem 3.2 to see that $\fd_{R}\H_{zR}^{1}(M)=1$. On the other hand, since $R_{\fp}$ is not Gorenstein, $\fd_{R}\E(R/\fp)=\infty$; and hence $\fd_{R}M=\infty.$}
\end{exam}

The next corollary shows that the equality in Theorem 3.2(i) may happen even if $M$ is not finitely generated.
\begin{cor}Suppose that $R$ is relative Cohen-Macaulay with respect to $\fa$ and that \emph{$\h_{R}\fa=n$}. Then, for every non-zero faithfully flat $R$-module $M$ we have \emph{$\fd_{R}\H_{\fa}^{n}(M)=n$.}
\begin{proof}Let $M$ be a non-zero faithfully flat $R$-module. Since the functor $\H_{\fa}^{n}(-)$ is right exact, we have $\H_{\fa}^{n}(M)\cong\H_{\fa}^{n}(R)\otimes_{R}M$; and hence by assumption $\H_{\fa}^{n}(M)\neq0$ and $\fm M\neq M$. By, \cite[Theorem 5.40]{JR}, there is a directed index set $I$ and a family of finitely generated free $R$-modules $\{ M_{i}\}_{i\in I}$ such that $M=\underset{{i\in I}}\varinjlim M_{i}$. Notice that each $M_{i}$ is relative Cohen-Macaulay with respect to $\fa$ and that $\h_{M_{i}}\fa=n$. Therefore, $\H_{\fa}^{j}(M)=\underset{i\in I}\varinjlim\H_{\fa}^{j}(M_{i})=0$ for all $j\neq n$; and hence, in view of Theorem 3.2(i), we get $\fd_{R}\H_{\fa}^{n}(M)\leq n$. Now, if $\fd_{R}\H_{\fa}^{n}(M)<n$, then $\Tor^{R}_{n}(R/\fm,\H_{\fa}^{n}(M))=0$. But, by Proposition 3.1(ii), $\Tor_{n}^{R}(R/\fm,\H_{\fa}^{n}(M))\cong M/\fm M\neq 0$ which is a contradiction.
\end{proof}
\end{cor}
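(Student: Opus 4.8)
The plan is to establish the two inequalities $\fd_{R}\H^{n}_{\fa}(M)\leq n$ and $\fd_{R}\H^{n}_{\fa}(M)\geq n$ separately, reducing the statement to Theorem 3.1 applied to the flat module $M$. Before either bound I would extract the two structural consequences of faithful flatness that power the argument. Since $R$ is relative Cohen-Macaulay with respect to $\fa$ with $\h_{R}\fa=n$, one has $\cd(\fa,R)=n$, so $\H^{i}_{\fa}(-)$ vanishes for $i>n$ on every module and hence $\H^{n}_{\fa}(-)$ is right exact; consequently $\H^{n}_{\fa}(M)\cong\H^{n}_{\fa}(R)\otimes_{R}M$. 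Faithful flatness then yields both $\H^{n}_{\fa}(M)\neq0$ (because $\H^{n}_{\fa}(R)\neq0$) and $M/\fm M\neq0$, and these two nonvanishings drive the two halves of the proof.

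For the upper bound I would first verify that $M$ meets the hypothesis of Theorem 3.1, namely $\H^{j}_{\fa}(M)=0$ for $j\neq n$. The natural route is to write $M=\varinjlim_{i\in I}M_{i}$ as a filtered colimit of finitely generated free modules via \cite[Theorem 5.40]{JR}. Each free $M_{i}$ inherits the relative Cohen-Macaulay behaviour of $R$, so $\H^{j}_{\fa}(M_{i})=0$ for $j\neq n$; since local cohomology commutes with filtered colimits, the same vanishing passes to $M$. Theorem 3.1(i) then applies, and since $M$ is flat (so $\fd_{R}M=0$) its proof in fact delivers the explicit bound $\fd_{R}\H^{n}_{\fa}(M)\leq\fd_{R}M+n=n$.

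For the lower bound I would invoke the isomorphism (3.2) in degree $n$: $\Tor^{R}_{n}(R/\fm,\H^{n}_{\fa}(M))\cong\Tor^{R}_{0}(R/\fm,M)\cong M/\fm M$, which is nonzero by the faithful flatness already recorded. Using the general inequality $\sup\{p:\Tor^{R}_{p}(R/\fm,-)\neq0\}\leq\fd_{R}(-)$, this single nonvanishing $\Tor$ in the top degree $n$ forces $\fd_{R}\H^{n}_{\fa}(M)\geq n$, and together with the upper bound this gives $\fd_{R}\H^{n}_{\fa}(M)=n$.

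The step I expect to be the main obstacle is this lower bound, for a structural reason: $\H^{n}_{\fa}(M)$ is not finitely generated, so its flat dimension cannot be read off from the vanishing of $\Tor(R/\fm,-)$ through the usual finitely generated criterion, and one must instead exhibit a single nonzero $\Tor$ at the top degree. The key point to check is that the isomorphism (3.2) remains valid without any finiteness hypothesis on $M$: its derivation inside Theorem 3.1 uses only that $\Tor^{R}_{i}(R/\fm,M)$ is $\fa$-torsion, which holds automatically because $R/\fm$ is $\fa$-torsion, so the relevant spectral sequence still collapses at the $c$-th column and (3.2) survives into the non-finitely-generated setting.
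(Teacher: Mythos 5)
Your proposal is correct and follows essentially the same route as the paper: right exactness of $\H^{n}_{\fa}(-)$ plus faithful flatness for the two nonvanishings, Lazard's theorem to verify the vanishing hypothesis of Theorem 3.1 and obtain the upper bound $\fd_{R}\H^{n}_{\fa}(M)\leq n$, and the isomorphism (3.2) in degree $n$ (valid without finite generation, exactly for the $\fa$-torsion reason you identify) to force the lower bound. The paper phrases the last step as a contradiction rather than via the $\sup$-of-nonvanishing-$\Tor$ inequality, but this is the same argument.
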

The next proposition is a generalization of \cite[Proposition 9.5.22]{EE}.
\begin{prop}Let $M$ be a $d$-dimensional finitely generated $R$-module of finite projective dimension. Then the following statements are equivalent.
\begin{itemize}
\item[(i)]{\emph{$M$ is Cohen-Macaulay.}}
\item[(ii)]{\emph{$\fd_{R}\H_{\fm}^d(M)=\fd_{R}M+d$}}.
\item[(iii)]{\emph{$\pd_{R}\H_{\fm}^d(M)=\pd_{R}M+d$}}.
\end{itemize}
\begin{proof}We first notice that the Artinian $R$-module $\H_{\fm}^{d}(M)$ has a natural $\hat{R}$-module structure and that $\fd_{R}\H_{\fm}^d(M)=\fd_{\hat{R}}\H_{\fm}^d(M)$. Now, assume that $\fd_{R}\H_{\fm}^d(M)<\infty$. Then, in view of \cite[Proposition 6]{J} and \cite[Theorem 3.2.6]{GM}, we see that $\fd_{R}\H_{\fm}^d(M)\leq\pd_{R}\H_{\fm}^d(M)\leq\dim R$. Next, by \cite[Theorem 3.1.17]{BH}, \cite[Theorem 4.16]{CF} and the Bass's theorem, one can deduce that$$\fd_{\hat{R}}\H_{\fm}^d(M)=\id_{\hat{R}}\Hom_{\hat{R}}(\H_{\fm}^d(M),\E_{\hat{R}}(\hat{R}/\fm\hat{R}))=\depth \hat{R}=\dim R.$$ It therefore follows that $\fd_{R}\H_{\fm}^d(M)=\pd_{R}\H_{\fm}^d(M)=\dim R$ and that $R$ is Cohen-Macaulay.

Now, the implications (ii)$\Leftrightarrow$(iii) follows immediately from the above argument.

(ii)$\Rightarrow$(i): Since $\fd_{R}\H_{\fm}^d(M)<\infty$, one can use the conclusion of the above argument in conjunction with the Auslander-Buchsbaum Theorem \cite[Theorem 1.3.3]{BH} to see that $M$ is Cohen-Macaulay. Finally the implication (i)$\Rightarrow$(ii) follows from Theorem 3.2.
\end{proof}
\end{prop}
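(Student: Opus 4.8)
The plan is to route everything through the unique nonzero module $\H^d_\fm(M)$, which is Artinian and, since $d=\dim M$, never vanishes. First I would record that for $\fa=\fm$ the notions of Cohen--Macaulay and relative Cohen--Macaulay coincide: as $\H^i_\fm(M)=0$ automatically for $i>d$, the module $M$ is Cohen--Macaulay precisely when $\H^i_\fm(M)=0$ for every $i\neq d$, i.e.\ when $M$ is relative Cohen--Macaulay with respect to $\fm$ with $\h_{M}\fm=d$ (note $\Supp(M/\fm M)=\{\fm\}$, so $\h_{M}\fm=\dim M=d$). Granting this, the implication (i)$\Rightarrow$(ii) is immediate from the final assertion of Theorem 3.1 applied with $\fa=\fm$ and $n=d$: it gives $\fd_R\H^d_\fm(M)=\fd_R M+d=\pd_R M+d$.

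The core of the proof is the single claim that $\fd_R\H^d_\fm(M)<\infty$ already forces $\fd_R\H^d_\fm(M)=\pd_R\H^d_\fm(M)=\dim R$ together with $R$ being Cohen--Macaulay. To establish it I would first pass to the completion, using that $\H^d_\fm(M)$ has a natural $\hat{R}$-module structure with $\fd_R\H^d_\fm(M)=\fd_{\hat{R}}\H^d_\fm(M)$, and then apply Matlis duality. Setting $K:=\Hom_{\hat{R}}(\H^d_\fm(M),\E_{\hat{R}}(\hat{R}/\fm\hat{R}))$, the module $K$ is a nonzero finitely generated $\hat{R}$-module, and the standard interchange between $\Tor$ and $\Ext$ under the duality functor yields $\id_{\hat{R}}K=\fd_{\hat{R}}\H^d_\fm(M)$. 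Thus $K$ is a nonzero finitely generated module of finite injective dimension, so the New Intersection Theorem (in the guise of Bass's characterization) makes $\hat{R}$ Cohen--Macaulay and forces $\id_{\hat{R}}K=\depth\hat{R}=\dim R$; hence $\fd_R\H^d_\fm(M)=\dim R$. Combining this with the standard bounds $\fd_R\H^d_\fm(M)\leq\pd_R\H^d_\fm(M)\leq\dim R$, valid once the flat dimension is finite, pins $\pd_R\H^d_\fm(M)$ to $\dim R$ as well, which proves the claim.

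With the claim in hand the rest is assembly. Each of (ii) and (iii) builds in the finiteness of the invariant on its left-hand side and its equality with $\pd_R M+d$; since by the claim each of $\fd_R\H^d_\fm(M)$ and $\pd_R\H^d_\fm(M)$ equals $\dim R$ as soon as it is finite, the two statements are equivalent, and the claim simultaneously supplies the ``moreover'' that $R$ is Cohen--Macaulay. For (ii)$\Rightarrow$(i) I would feed the claim's equality $\fd_R\H^d_\fm(M)=\dim R=\depth R$ into hypothesis (ii) to obtain $\pd_R M+d=\depth R$, whereupon the Auslander--Buchsbaum formula $\pd_R M+\depth M=\depth R$ forces $\depth M=d=\dim M$, i.e.\ $M$ is Cohen--Macaulay; the reverse implication (i)$\Rightarrow$(ii) was already observed. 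I expect the crux claim to be the only substantial step: the delicate part is converting the flat dimension of the Artinian module $\H^d_\fm(M)$ into the injective dimension of a finitely generated module, so that the New Intersection Theorem becomes available, and checking that this passage preserves the dimension exactly.
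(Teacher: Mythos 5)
Your proposal is correct and takes essentially the same route as the paper's own proof: both reduce to the completion, use Matlis duality to identify $\fd_{\hat R}\H^d_\fm(M)$ with the injective dimension of the nonzero finitely generated module $\Hom_{\hat R}(\H^d_\fm(M),\E_{\hat R}(\hat R/\fm\hat R))$, and invoke the New Intersection Theorem to force $\id=\depth\hat R=\dim R$ and the Cohen--Macaulayness of $R$. The remaining assembly is also identical — the Jensen/Gruson--Raynaud bounds $\fd\leq\pd\leq\dim R$ give (ii)$\Leftrightarrow$(iii), Auslander--Buchsbaum gives (ii)$\Rightarrow$(i), and Theorem 3.1 gives (i)$\Rightarrow$(ii).
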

Let $(R,\fm)$ be a local ring and let $M$ be a finitely generated $R$-module with finite projective dimension. It follows from Theorem 3.2 that if $M$ is relative Cohen-Macaulay with respect to an ideal $\fa$ of $R$, then $\fd_{R}\H_{\fa}^{\cd(\fa,M)}(M)=\pd_{R}M+\cd(\fa,M)$. Also, in pervious proposition, we deuced that the converse holds whenever $\fa=\fm$. Therefore, it is natural to ask the following question.
\begin{ques}Let $M$ be a finitely generated $R$-module of finite projective dimension and let $\fa$ be a non-maximal ideal of $R$. Are the following statements equivalent?
\begin{itemize}
\item[(i)]{\emph{$M$ is relative Cohen-Macaulay with respect to $\fa$.}}
\item[(ii)]{\emph{$\fd_{R}\H_{\fa}^{\cd(\fa,M)}(M)=\pd_{R}M+\cd(\fa,M)$}}.
\end{itemize}
\end{ques}
The next proposition has been proved in \cite[Proposition 3.3]{MR} under the extra conditions that the underlying ring is Cohen-Macaulay and admits a dualizing complex.
\begin{prop} Let $C$ be a semidualizing $R$-module. Then the following statements are equivalent.
\begin{itemize}
\item[(i)]{$C$ is a dualizing $R$-module.}
\item[(ii)]{\emph{$\cgid_{R}\H_{\fa}^n(R)<\infty$ for all ideals $\fa$ of $R$ such that $R$ is relative Cohen-Macaulay with respect to $\fa$ and that $\h_{R}\fa=n.$ }}
\item[(iii)]{\emph{$\cgid_{R}\H^n_{\fa}(R)<\infty$ for some ideal $\fa$ of $R$ such that $R$ is relative Cohen-Macaulay with respect to $\fa$ and that ${\h}_{R}\fa =n.$}}
\end{itemize}
\begin{proof}The implication (i)$\Rightarrow$(ii) follows from \cite[Theorem 3.2(ii)]{MR} and the implication (ii)$\Rightarrow$(iii) is clear.

(iii)$\Rightarrow$(i): Suppose that $\cgid_{R}\H^n_{\fa}(R)<\infty$, where $\fa$ is an ideal of $R$ such that $R$ is relative Cohen-Macaulay with respect to $\fa$ and that $\h_{R}\fa=n$.
Then, in view of Theorem 3.2, $\fd_{R}\H_{\fa}^n(R)<\infty$. Hence, one can use \cite[Proposition 6]{J} to see that $\pd_{R}\H_{\fa}^n(R)<\infty$. Therefore, by \cite[Theorem 2.3]{WY}, we have $\cgid_{R}\H^n_{\fa}(R)=\cid_{R}\H^n_{\fa}(R)$. Hence, one can use \cite[Theorem 3.2(ii)]{MR} to complete the proof.
\end{proof}
\end{prop}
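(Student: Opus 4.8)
The plan is to establish the cycle of implications (i)$\Rightarrow$(ii)$\Rightarrow$(iii)$\Rightarrow$(i), in which only the last arrow carries genuine content. For (i)$\Rightarrow$(ii) I would simply quote the companion result \cite[Theorem 3.2(ii)]{MR}: once $C$ is known to be dualizing, the finiteness of $\cgid_{R}\H_{\fa}^{n}(R)$ for every admissible $\fa$ is already recorded there, and since that statement is quantified over all such $\fa$ it yields (ii) verbatim. The implication (ii)$\Rightarrow$(iii) is purely formal, but it does require at least one admissible ideal to exist; I would produce one by taking $\fa=(0)$, for which $\H_{(0)}^{0}(R)=R$ is the only non-vanishing local cohomology module and $\h_{R}(0)=0$, so that $R$ is (trivially) relative Cohen-Macaulay with respect to $(0)$ and the universally quantified (ii) specializes to (iii).

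The heart of the matter is (iii)$\Rightarrow$(i). The first move is to notice that finiteness of the flat dimension of the local cohomology module holds for a reason entirely independent of the $G_{C}$-hypothesis: applying Theorem 3.1 to the non-zero finitely generated module $M=R$ --- which is relative Cohen-Macaulay with respect to $\fa$ and satisfies $\fd_{R}R=0$ --- yields $\fd_{R}\H_{\fa}^{n}(R)=\fd_{R}R+n=n<\infty$ directly. Thus $\H_{\fa}^{n}(R)$ has finite flat dimension unconditionally, and the hypothesis $\cgid_{R}\H_{\fa}^{n}(R)<\infty$ is held in reserve for the final collapse.

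Next I would upgrade flat dimension to projective dimension. Since $R$ is Noetherian, the Jensen-type statement \cite[Proposition 6]{J} converts $\fd_{R}\H_{\fa}^{n}(R)<\infty$ into $\pd_{R}\H_{\fa}^{n}(R)<\infty$. Now $\pd_{R}\H_{\fa}^{n}(R)<\infty$ and, by hypothesis, $\cgid_{R}\H_{\fa}^{n}(R)<\infty$ hold simultaneously, so the comparison theorem \cite[Theorem 2.3]{WY} forces $\cgid_{R}\H_{\fa}^{n}(R)=\cid_{R}\H_{\fa}^{n}(R)$, whence $\cid_{R}\H_{\fa}^{n}(R)<\infty$. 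Feeding this finiteness of the ordinary $C$-injective dimension back into \cite[Theorem 3.2(ii)]{MR} concludes that $C$ is dualizing, closing the cycle.

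The step I expect to be the genuine obstacle is the passage from finite flat dimension to finite projective dimension. The module $\H_{\fa}^{n}(R)$ is in general neither finitely generated nor Artinian, so no elementary argument is available, and one is really relying on the set-theoretic and Noetherian input behind \cite[Proposition 6]{J}. Once that step is secured, the rigidity encoded in \cite[Theorem 2.3]{WY} --- that finite projective dimension together with finite $G_{C}$-injective dimension forces the equality $\cgid_{R}=\cid_{R}$ --- does the remaining work almost mechanically, so the conceptual weight of the argument is concentrated entirely in verifying that $\H_{\fa}^{n}(R)$ meets the hypotheses of the Jensen-type result.
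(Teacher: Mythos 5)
Your proposal is correct and follows essentially the same route as the paper: the implication (iii)$\Rightarrow$(i) runs through exactly the same chain --- Theorem 3.1 applied to $M=R$ to get $\fd_{R}\H_{\fa}^{n}(R)<\infty$, then \cite[Proposition 6]{J} to upgrade to finite projective dimension, then \cite[Theorem 2.3]{WY} to force $\cgid_{R}\H_{\fa}^{n}(R)=\cid_{R}\H_{\fa}^{n}(R)$, and finally \cite[Theorem 3.2(ii)]{MR}. Your two small refinements --- noting that the finiteness of $\fd_{R}\H_{\fa}^{n}(R)$ is unconditional rather than a consequence of the $G_{C}$-hypothesis, and exhibiting $\fa=(0)$ so that (ii)$\Rightarrow$(iii) is not vacuous --- are both sound clarifications of points the paper leaves implicit.
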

An immediate consequence of the previous proposition is the next Corollary, which has been proved in \cite[Corollary 3.10]{MZ} under the additional assumptions that $R$ is Cohen-Macaulay and admits a dualizing complex.
\begin{cor}The following statements are equivalent.
\begin{itemize}
\item[(i)]{\emph{$R$ is a Gorenstein ring.}}
\item[(ii)]{\emph{$\Gid_{R}\H_{\fa}^n(R)<\infty$ for all ideals $\fa$ of $R$ such that $R$ is relative Cohen-Macaulay with respect to $\fa$ and that $\h_{R}\fa=n.$ }}
\item[(iii)]{\emph{$\Gid_{R}\H_{\fa}^n(R)<\infty$ for some ideal $\fa$ of $R$ such that $R$ is relative Cohen-Macaulay with respect to $\fa$ and that $\h_{R}\fa=n.$ }}
\end{itemize}
\end{cor}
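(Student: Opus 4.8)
The plan is to obtain this corollary as the special case $C=R$ of Proposition 3.6. First I would note that $R$ is always a semidualizing module over itself: the canonical map $R\rightarrow\Hom_{R}(R,R)$ is an isomorphism and $\Ext_{R}^{i}(R,R)=0$ for all $i\geq1$ because $R$ is free over itself. Hence Proposition 3.6 is available with $C=R$, and it remains only to translate its three conditions into the three conditions stated here.

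For condition (i), I would invoke the definition of a dualizing module together with the standard characterization of Gorenstein rings. By Definition 2.2 the semidualizing module $R$ is dualizing exactly when $\id_{R}R<\infty$, and finiteness of the self-injective dimension $\id_{R}R$ is precisely the condition that $R$ be Gorenstein. Thus ``$R$ is a dualizing $R$-module'' coincides with ``$R$ is a Gorenstein ring.'' For conditions (ii) and (iii), I would use the observation recorded in the Definitions of Section 2 that when $C=R$ the $C$-Gorenstein injective dimension $\cgid_{R}(-)$ is nothing but the ordinary Gorenstein injective dimension $\Gid_{R}(-)$. Under this identification the conditions $\cgid_{R}\H_{\fa}^{n}(R)<\infty$ appearing in Proposition 3.6 become verbatim the conditions $\Gid_{R}\H_{\fa}^{n}(R)<\infty$ of the present statement, with the same quantifiers over the ideals $\fa$ for which $R$ is relative Cohen-Macaulay with $\h_{R}\fa=n$.

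With these two dictionaries in hand, the equivalence (i)$\Leftrightarrow$(ii)$\Leftrightarrow$(iii) of Proposition 3.6 transfers directly to the equivalence asserted here, completing the proof. Since everything reduces to the substitution $C=R$ and to well-known identifications, there is no genuine obstacle; the only step deserving explicit mention is the equivalence between $R$ having finite injective dimension over itself and $R$ being Gorenstein, which is classical.
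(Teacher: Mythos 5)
Your proof is correct and is exactly the paper's intended argument: the paper presents this corollary as an immediate consequence of the preceding semidualizing proposition (numbered 3.5 there, the one you cite) by setting $C=R$. The two identifications you make explicit — that $R$ is a dualizing module over itself precisely when $\id_{R}R<\infty$, i.e.\ when $R$ is Gorenstein, and that $\cgid_{R}(-)$ coincides with $\Gid_{R}(-)$ for $C=R$, as recorded in the paper's Definitions 2.3 — are the whole content of that specialization.
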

It follows from the proof of \cite[Theorem 3.2(i)]{MR} that if $n$ is a non-negative integer and $M$ is an $R$-module( not necessarily finitely generated) such that ${\H}^{i}_{\fa}(M)=0$ for all $i\neq n$ and that $\cid_{R}M$ is finite, then $\cid_{R}{\H}^{n}_{\fa}(M)$ is finite. This fact leads us to the following proposition which recovers \cite[Theorem 3.8]{MR}.
\begin{prop} Let $C$ be a semidualizing $R$-module. Consider the following statements.
\begin{itemize}
\item[(i)]{$R$ is Gorenstein.}
\item[(ii)]{\emph{$\cid_{R}{\H}^{n}_{\fa}(C)<\infty$} for all ideals $\fa$ of $R$ such that $R$ is relative Cohen-Macaulay with respect to $\fa$ and that $\emph{\h}_{R}\fa =n$}.
\item[(iii)]{\emph{$\cid_{R}{\H}^{n}_{\fa}(C)<\infty$} for some ideal $\fa$ of $R$ such that $R$ is relative Cohen-Macaulay with respect to $\fa$ and that $\emph{\h}_{R}\fa =n$}.
\end{itemize}
\emph{Then, the implications (i)$\Rightarrow$(ii)$\Rightarrow$(iii) hold true, while (iii) implies (i) whenever $R$ is Cohen-Macaulay.}
\begin{proof}
First, notice that $R\cong C$ whenever $R$ is Gorenstein. Hence, the implication (i)$\Rightarrow$(ii) follows from \cite[Theorem 2.5(i)]{MZ} and the implication (ii)$\Rightarrow$(iii) is clear.

(iii)$\Rightarrow$(i): Let $\fa$ be an ideal of $R$ such that $R$ is relative Cohen-Macaulay with respect to $\fa$ and that $\h_{R}\fa=n$. Since $\Supp_{R}(C)=\Spec(R)$, in view of \cite[Theorem 2.2]{DNT}, we get $\cd(\fa,R)=\cd(\fa,C)$. On the other hand, by \cite[Theorem 2.2.6(c)]{W}, $\gr(\fa,R)=\gr(\fa,C)$. Hence, 2.4 implies that $C$ is relative Cohen-Macaulay with respect to $\fa$. Since $\H_{\fm}^0(\E(R/\fm))=\E(R/\fm)$ and for any non-maximal prime ideal $\fp$ of $R$, the $R$-module $\H_{\fm}^0(\E(R/\fp))=0$ vanishes, we may apply \cite[Proposition 2.8]{MZ} to see that $\H_{\fm}^i(\H_{\fa}^n(C))=\H_{\fm}^{n+i}(C)$ for all $i\geq0$. Therefore, by considering the additional assumption that $R$ is Cohen-Macaulay, one can deduce that
\[ \H_{\fm}^i(\H_{\fa}^n(C))=\begin{cases}
       0 & \text{if $i\neq\dim R/\fa $}\\
       \H^d_{\fm}(C) & \text{if $i=\dim R/\fa ,$}
       \end{cases} \]
where $d=\dim R$. Thus, by the assumption and \cite[Theorem 3.2(i)]{MR}, we see that $\cid_{R}{\H}^{d}_{\fm}(C)$ is finite. Now, one can use \cite[Theorem 3.8]{MR} to complete the proof.
\end{proof}
\end{prop}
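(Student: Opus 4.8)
The plan is to treat the three implications separately, with essentially all the content lying in (iii)$\Rightarrow$(i). For (i)$\Rightarrow$(ii) I would begin from the fact that over a Gorenstein local ring every semidualizing module is isomorphic to $R$; thus $R$ Gorenstein forces $C\cong R$, so that $\cid_{R}(-)$ collapses to the ordinary injective dimension $\id_{R}(-)$. Since a Gorenstein ring has $\id_{R}R<\infty$, the injective-dimension forerunner of the present results, \cite[Theorem 2.5(i)]{MZ}, applies with $M=R$: for each ideal $\fa$ with $R$ relative Cohen-Macaulay of height $n$, finiteness of $\id_{R}R$ yields finiteness of $\id_{R}\H^{n}_{\fa}(R)$, that is, of $\cid_{R}\H^{n}_{\fa}(C)$. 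The implication (ii)$\Rightarrow$(iii) is then purely formal, the class of admissible ideals being nonempty (for example $\fa=0$, or any ideal generated by a regular sequence, renders $R$ relative Cohen-Macaulay).

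The substance is (iii)$\Rightarrow$(i), where I would adjoin the hypothesis that $R$ is Cohen-Macaulay and fix an ideal $\fa$ with $R$ relative Cohen-Macaulay, $\h_{R}\fa=n$, and $\cid_{R}\H^{n}_{\fa}(C)<\infty$. The strategy is to transport finiteness of $C$-injective dimension from $\H^{n}_{\fa}(C)$ up to the top module $\H^{d}_{\fm}(C)$, where $d=\dim R$ and where a Gorensteinness criterion phrased in terms of $C$-injective dimension is available. First I would check that $C$ is itself relative Cohen-Macaulay with respect to $\fa$: being semidualizing, $C$ is faithful, so $\Supp_{R}(C)=\Spec(R)$, whence $\cd(\fa,C)=\cd(\fa,R)=n$ by \cite[Theorem 2.2]{DNT}, while $\gr(\fa,C)=\gr(\fa,R)=n$ by \cite[Theorem 2.2.6(c)]{W}; equality of grade and cohomological dimension is exactly the defining condition. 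Right-exactness of the top functor $\H^{n}_{\fa}(-)$ then gives $\H^{n}_{\fa}(C)\cong\H^{n}_{\fa}(R)\otimes_{R}C$.

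Since $C$ is now relative Cohen-Macaulay with respect to $\fa$, the Grothendieck spectral sequence $\H^{p}_{\fm}(\H^{q}_{\fa}(C))\Rightarrow\H^{p+q}_{\fm}(C)$ has only the single row $q=n$ and therefore collapses, giving $\H^{i}_{\fm}(\H^{n}_{\fa}(C))\cong\H^{n+i}_{\fm}(C)$ for all $i\ge0$; concretely I would invoke \cite[Proposition 2.8]{MZ}, whose hypotheses are verified using $\H^{0}_{\fm}(\E(R/\fm))=\E(R/\fm)$ together with $\H^{0}_{\fm}(\E(R/\fp))=0$ for non-maximal $\fp$. At this point the Cohen-Macaulay hypothesis becomes essential: over a Cohen-Macaulay ring a semidualizing module is maximal Cohen-Macaulay, so $\H^{n+i}_{\fm}(C)\neq0$ only when $n+i=d$; hence $\H^{n}_{\fa}(C)$ has a unique non-vanishing $\fm$-local cohomology, located in degree $d-n=\dim R/\fa$ and equal to $\H^{d}_{\fm}(C)$. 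The observation recorded just before the statement---that finite $C$-injective dimension is inherited by the unique non-vanishing local cohomology module---now yields $\cid_{R}\H^{d}_{\fm}(C)<\infty$, and \cite[Theorem 3.8]{MR} converts this into the Gorensteinness of $R$. The step I expect to be the main obstacle, and the precise reason the converse is claimed only in the Cohen-Macaulay case, is this concentration: without Cohen-Macaulayness $C$ need not be maximal Cohen-Macaulay, $\H^{n}_{\fa}(C)$ may carry several non-vanishing $\fm$-local cohomology modules, and one loses both the clean identification with $\H^{d}_{\fm}(C)$ and the applicability of the single-module criterion \cite[Theorem 3.8]{MR}.
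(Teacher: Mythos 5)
Your proposal is correct and takes essentially the same route as the paper's own proof: the same reduction $C\cong R$ plus \cite[Theorem 2.5(i)]{MZ} for (i)$\Rightarrow$(ii), and for (iii)$\Rightarrow$(i) the same chain --- $C$ is relative Cohen--Macaulay with respect to $\fa$ via \cite{DNT} and \cite{W}, the collapse $\H_{\fm}^{i}(\H_{\fa}^{n}(C))\cong\H_{\fm}^{n+i}(C)$ via \cite[Proposition 2.8]{MZ}, concentration of the $\fm$-cohomology in degree $\dim R/\fa$ using the Cohen--Macaulay hypothesis, the observation preceding the proposition to obtain $\cid_{R}\H_{\fm}^{d}(C)<\infty$, and finally \cite[Theorem 3.8]{MR}. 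The only differences are expository: you spell out why $\H_{\fm}^{n+i}(C)$ vanishes off degree $d$ (since $C$ is maximal Cohen--Macaulay over a Cohen--Macaulay ring) and why the class of admissible ideals in (ii)$\Rightarrow$(iii) is nonempty, details the paper leaves implicit.
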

 It is known that if a local ring admits a non-zero Cohen-Macaulay module of finite projective dimension, then it is a Cohen-Macaulay ring. The following theorem is a generalization of this result.
\begin{thm}Let $C$ be a semidualizing $R$-module. If there exists a non-zero Cohen-Macaulay $R$-module $M$ with finite \emph{$\cpd_{R}M$}, then $R$ is Cohen-Macaulay.
\begin{proof}Let $M$ be a non-zero Cohen-Macaulay $R$-module of dimension $n$ such that $\cpd_{R}M$ is finite. Notice that, in view of \cite[Theorem 2.11(c)]{TW}, we have $\cpd_{R}M=\pd_{R}\Hom_{R}(C,M)$. Also, since $C\otimes_{R}\hat{R}$ is a semidualizing $\hat{R}$-module and $\Hom_{\hat R}(\hat{C},\hat{M})\cong\Hom_{R}(C,M)\otimes_{R}\hat R$, we may assume that $R$ is complete. Now, by using \cite[Corollary 2.9(a)]{TW}, we have $M\in\mathcal{B}_{C}(R)$. Therefore, $\Tor_{i}^{R}(C,\Hom_{R}(C,M))=0$ for all $i>0$ and $C\otimes_{R}\Hom_{R}(C,M)\cong M$. Hence, one can use \cite[Theorem 1.2]{AUS} to obtain the following equalities{ \[\begin{array}{rl}
 \depth_{R} M&=\depth_{R}(C\otimes_{R}\Hom_{R}(C,M))\\
 &=\depth_{R} C-\depth R+\depth_{R}\Hom_{R}(C,M)\\
 &=\depth_{R}\Hom_{R}(C,M).
 \end{array}\]}
 On the other hand, since $\Ass_{R}(\Hom_{R}(C,M))=\Ass_{R}(M)$ and $M$ is Cohen-Macaulay, we see that $\dim_{R} M=\dim_{R}\Hom_{R}(C,M)$. Therefore, $\Hom_{R}(C,M)$ is Cohen-Macaulay. Hence, one can use Theorem 3.2 to see that the injective dimension of the finitely generated $R$-module $\Hom_{R}(\H_{\fm}^{n}(\Hom_{R}(C,M)),\E_{R}(R/\fm))$ is finite. Therefore, by the Bass's theorem, $R$ is Cohen-Macaulay.
\end{proof}
\end{thm}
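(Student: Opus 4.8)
The plan is to reduce this statement about $C$-projective dimension to the classical case of ordinary projective dimension by transferring all the hypotheses to the module $N:=\Hom_{R}(C,M)$. The key input from the theory of semidualizing modules is that finiteness of $\cpd_{R}M$ forces $M$ into the Bass class $\mathcal{B}_{C}(R)$ and yields $\cpd_{R}M=\pd_{R}N$. Concretely this gives $\pd_{R}N<\infty$, the vanishing $\Tor_{i}^{R}(C,N)=0$ for all $i>0$, and the evaluation isomorphism $C\otimes_{R}N\cong M$. Since $C\otimes_{R}\hat{R}$ is semidualizing over $\hat{R}$, since $\Hom_{\hat R}(\hat C,\hat M)\cong N\otimes_{R}\hat R$, and since Cohen-Macaulayness and finiteness of projective dimension are insensitive to completion, I would first replace $R$ by $\hat{R}$ and assume $R$ is complete; this is what makes Matlis duality available at the end.

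The heart of the argument is to show that $N$ is itself a non-zero Cohen-Macaulay module of finite projective dimension. Finiteness of $\pd_{R}N$ is already in hand, and $N\neq 0$ because $C\otimes_{R}N\cong M\neq0$. For the dimension, I would use that a semidualizing module has full support, so that $\Ass_{R}N=\Supp_{R}C\cap\Ass_{R}M=\Ass_{R}M$ and hence $\dim_{R}N=\dim_{R}M$. For the depth, the decisive tool is the Auslander-type depth formula applied to $C\otimes_{R}N\cong M$: using $\Tor_{i}^{R}(C,N)=0$ for $i>0$ together with $\depth_{R}C=\depth R$, one obtains
\[
\depth_{R}M=\depth_{R}C-\depth R+\depth_{R}N=\depth_{R}N.
\]
Combining $\dim_{R}N=\dim_{R}M=\depth_{R}M=\depth_{R}N$ shows that $N$ is Cohen-Macaulay, of the same dimension $n:=\dim_{R}M$. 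I expect this step to be the main obstacle, since it requires checking the precise hypotheses of the depth formula and the associated-prime identity; once these are verified the remainder is formal.

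With $N$ established as a non-zero Cohen-Macaulay module of finite projective dimension, it is relative Cohen-Macaulay with respect to $\fm$ with $\h_{N}\fm=n$, so $\H^{i}_{\fm}(N)=0$ for all $i\neq n$. Applying 3.1(i) with $\fa=\fm$ and using $\pd_{R}N<\infty$ gives $\fd_{R}\H^{n}_{\fm}(N)<\infty$. Since $R$ is now complete, Matlis duality converts the Artinian module $\H^{n}_{\fm}(N)$ into the finitely generated module $\Hom_{R}(\H^{n}_{\fm}(N),\E_{R}(R/\fm))$, and finiteness of flat dimension dualizes to finiteness of its injective dimension. Thus $R$ admits a non-zero finitely generated module of finite injective dimension, so the New Intersection Theorem (Bass's conjecture) forces $R$ to be Cohen-Macaulay. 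Alternatively, once $N$ is known to be a non-zero Cohen-Macaulay module of finite projective dimension, one may simply invoke the classical result recalled just before the theorem to reach the same conclusion.
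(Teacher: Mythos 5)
Your proposal is correct and takes essentially the same route as the paper's own proof: reduce to $N=\Hom_{R}(C,M)$ via $\cpd_{R}M=\pd_{R}N$ and Bass-class membership, pass to the completion, establish that $N$ is a non-zero Cohen-Macaulay module of finite projective dimension using Auslander's depth formula and the identity $\Ass_{R}N=\Ass_{R}M$, and conclude with 3.1, Matlis duality, and the New Intersection Theorem. The only cosmetic difference is your closing remark that one could instead quote the classical result directly once $N$ is in hand, which is a legitimate (non-circular) shortcut since that result is known independently.
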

Applying Theorem 3.10 to the semidualizing $R$-module $C=R$,
we immediately obtain the following well-known result.
\begin{cor}If $R$ admits a non-zero Cohen-Macaulay module of finite projective dimension, then $R$ is Cohen-Macaulay.
\end{cor}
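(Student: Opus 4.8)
The plan is to recognize the statement as the special case $C=R$ of Theorem 3.8, and I would present this observation first. When $C=R$ the natural map $R\to\Hom_R(R,R)$ is the identity and $\Ext^i_R(R,R)=0$ for all $i\geq1$, so $R$ is a semidualizing module over itself; moreover $\mathcal{P}_R(R)$ is exactly the class of projective modules, whence $\cpd_R M=\pd_R M$ for every $M$ (as already noted in Definition 2.2). Thus a non-zero Cohen-Macaulay module of finite projective dimension is precisely a non-zero Cohen-Macaulay module of finite $\cpd_R$, and Theorem 3.8 yields at once that $R$ is Cohen-Macaulay.

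For a self-contained argument that does not quote Theorem 3.8 — and which displays the mechanism of that theorem in its simplest case — I would argue directly. Let $M\neq0$ be finitely generated Cohen-Macaulay with $\dim M=n$ and $\pd_R M<\infty$. Since $M$ is Cohen-Macaulay, $\H^i_\fm(M)=0$ for $i\neq n$ while $\H^n_\fm(M)\neq0$, so $M$ is relative Cohen-Macaulay with respect to $\fm$ and $\h_M\fm=n$. Theorem 3.1 then gives $\fd_R\H^n_\fm(M)=\pd_R M+n<\infty$.

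Next I would pass to the Matlis dual. The module $\H^n_\fm(M)$ is Artinian, and replacing $R$ by $\hat R$ (which alters neither the hypothesis nor the conclusion, and preserves $\fd$ of the Artinian module) the dual $D:=\Hom_{\hat R}\bigl(\H^n_\fm(M),\E_{\hat R}(\hat R/\fm\hat R)\bigr)$ is a non-zero finitely generated $\hat R$-module. The flat--injective duality over a complete local ring gives $\id_{\hat R}D=\fd_{\hat R}\H^n_\fm(M)<\infty$. Hence $\hat R$ admits a non-zero finitely generated module of finite injective dimension, and by the New Intersection Theorem (Bass's theorem) $\hat R$, and therefore $R$, is Cohen-Macaulay.

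The only genuinely deep ingredient is the New Intersection Theorem, which supplies the final implication, and I expect this to be the crux; the remaining steps — the reduction $\cpd_R=\pd_R$, the computation of $\fd_R\H^n_\fm(M)$ via Theorem 3.1, and the flat--injective Matlis duality — are formal. A secondary point to check is that the dimension and depth data behave well under completion, so that the conclusion obtained for $\hat R$ transfers back to $R$.
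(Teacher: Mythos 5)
Your proposal is correct and takes essentially the same route as the paper: the paper's entire proof of this corollary is your first paragraph, i.e.\ apply Theorem 3.8 with $C=R$, using that $\cpd_{R}M=\pd_{R}M$ in that case. Your supplementary self-contained argument is not a genuinely different approach either --- it is precisely the paper's proof of Theorem 3.8 specialized to $C=R$ (reduce to the complete case, use Theorem 3.1 to get $\fd_{R}\H_{\fm}^{n}(M)<\infty$, Matlis-dualize to a non-zero finitely generated module of finite injective dimension, and invoke the New Intersection Theorem), with the semidualizing-module steps trivialized.
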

\section{local cohomology and gorenstein flat dimension}

The starting point of this section is the next lemma which has been proved, in \cite[Lemma 3.7]{MZ} and \cite[Corollary 3.9]{MZ}, under the extra assumption that $R$ is Cohen-Macaulay.
\begin{lem}Suppose that $M$ is a non-zero finitely generated $R$-module. Then the following statements hold true.
\begin{itemize}
\item[(i)]{\emph{Suppose that $x\in\fm$ is both $R$-regular and $M$-regular. Then $\Gid_{R}M <\infty$ if and only if {$\Gid_{{R/xR}}(M/xM)<\infty$.}}}
\item[(ii)]{\emph{Assume that $M$ is Cohen-Macaulay of dimension $n$. Then $\Gid_{R}M<\infty$ if and only if $\Gid_{R}\H_{\fm}^n(M)<\infty$.}}
\end{itemize}
\begin{proof}
First notice that, by \cite[Theorem 3.24]{CF}, $\Gid_{R}M=\Gid_{\hat{R}}\hat{M}$. On the other hand, since $\H_{\fm}^{i}(M)$ is Artinian, in view of \cite[Lemma 3.6]{RS}, we have $\Gid_{R}\H_{\fm}^n(M)=\Gid_{\hat{R}}\H_{\fm}^n(M)=\Gid_{\hat{R}}\H_{\fm\hat{R}}^n(\hat{M})$. Thus, we can assume that $R$ is complete; and hence it has a dualizing complex $D$.

(i): Set $\overline{R}=R/xR$. We notice that $\fd_{R}\overline{R}<\infty$ and ${\mu}^{i+\small{\depth R}}(\fm,R)=\mu^{i+\small{\depth \overline{R}}}(\overline{\fm},\overline{R})$ for all $i\in \mathbb{Z}$, where $\mu^{i}(\fm,R)$ denotes the i-th Bass number of $R$ with respect to $\fm$. Hence, by using \cite[2.11]{AV}, we see that $D\otimes_{R}^{\mathbf{L}}\overline{R}$ is a dualizing complex for $\overline{R}$. On the other hand, by the assumption, one can deduce that $\Tor^{R}_{i}(\overline{R},M)=0$ for all $i>0$. Therefore $\overline{M}\simeq M\otimes_{R}^{\mathbf{L}}\overline{R}$, in derived category $\mathcal{D}(R)$. Now, we can use \cite[Theorem 5.3]{CFH} to complete the proof.

(ii): Let $M$ be Cohen-Macaulay with $\dim M=n$. Then, the implication ($\Rightarrow$) follows from \cite[Theorem 3.8(i)]{MZ}. To prove the converse, we proceed by induction on $n$. The case $n=0$ is obvious. Assume that $n>0$ and that the result has been proved for $n-1$. Now, by using \cite[Theorem 3.12(ii)]{MZ} in conjunction with the assumption, one can choose an element $x$ in $\fm$ which is both $R$-regular and $M$-regular. Next, we can use the induced exact sequence $$0\longrightarrow {\H}^{n-1}_\fm(M/xM)\longrightarrow {\H}^{n}_\fm(M) \longrightarrow {\H}^{n}_\fm(M) \longrightarrow 0$$  and \cite[Proposition 3.9]{CF}  to see that $\Gid_{R}{\H}^{n-1}_\fm(M/xM)$ is finite. Hence, by the inductive hypothesis, $\Gid_{R}M/xM$ is finite. Therefore, in view of \cite[Theorem 7.6(b)]{CF}, $\Gid_{R/xR}M/xM<\infty$. It therefore follows from part (i) that $\Gid_{R}M$ is finite. Now the result follows by induction.

\end{proof}
 \end{lem}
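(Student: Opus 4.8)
The plan is to reduce both parts to the case where $R$ is complete and then exploit a dualizing complex. I would first note that Gorenstein injective dimension is insensitive to completion: by \cite[Theorem 3.24]{CF} one has $\Gid_{R}M=\Gid_{\hat{R}}\hat{M}$, and since each $\H_{\fm}^{i}(M)$ is Artinian, \cite[Lemma 3.6]{RS} gives $\Gid_{R}\H_{\fm}^{n}(M)=\Gid_{\hat{R}}\H_{\fm\hat{R}}^{n}(\hat{M})$. Hence I may assume throughout that $R$ is complete, so that it possesses a dualizing complex $D$.

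For part (i), set $\overline{R}=R/xR$; the idea is to transport finiteness of $\Gid$ across the surjection $R\to\overline{R}$ by comparing dualizing complexes and realizing $\overline{M}$ as a derived base change. Since $x$ is $R$--regular, $\fd_{R}\overline{R}<\infty$, and a short computation shows that the Bass numbers of $R$ and $\overline{R}$ agree after the expected depth shift; this is precisely the input that lets me conclude, via \cite[2.11]{AV}, that $D\otimes_{R}^{\mathbf{L}}\overline{R}$ is a dualizing complex for $\overline{R}$. On the module side, $M$--regularity of $x$ yields $\Tor_{i}^{R}(\overline{R},M)=0$ for all $i>0$, so that $\overline{M}\simeq M\otimes_{R}^{\mathbf{L}}\overline{R}$ in the derived category $\mathcal{D}(R)$. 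With the dualizing complex and the module both expressed as such derived base changes, I would invoke the change-of-rings result \cite[Theorem 5.5]{CFH} to transfer finiteness of $\Gid$ in both directions, which gives the equivalence in (i).

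For part (ii), the forward implication is supplied by \cite[Theorem 3.8(i)]{MZ}, so the content is the converse, which I would establish by induction on $n=\dim M$. The case $n=0$ is immediate, since then $M$ is $\fm$--torsion and $M\cong\H_{\fm}^{0}(M)$. For $n>0$, assuming $\Gid_{R}\H_{\fm}^{n}(M)<\infty$, I would use \cite[Theorem 3.12(ii)]{MZ} to choose $x\in\fm$ that is simultaneously $R$--regular and $M$--regular. The vanishings $\H_{\fm}^{n-1}(M)=0$ (Cohen--Macaulayness) and $\H_{\fm}^{n}(M/xM)=0$ (as $\dim M/xM=n-1$) reduce the long exact sequence attached to $0\to M\xrightarrow{x}M\to M/xM\to 0$ to
$$0\longrightarrow \H_{\fm}^{n-1}(M/xM)\longrightarrow \H_{\fm}^{n}(M)\stackrel{x}{\longrightarrow} \H_{\fm}^{n}(M)\longrightarrow 0.$$
Applying \cite[Proposition 3.9]{CF} to this sequence propagates finiteness of $\Gid$ to the submodule, giving $\Gid_{R}\H_{\fm}^{n-1}(M/xM)<\infty$. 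Since $M/xM$ is Cohen--Macaulay of dimension $n-1$, the inductive hypothesis yields $\Gid_{R}(M/xM)<\infty$; then \cite[Theorem 7.6(b)]{CF} passes this to $\Gid_{\overline{R}}(M/xM)<\infty$, and part (i) finally returns $\Gid_{R}M<\infty$, completing the induction.

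I expect the main obstacle to be part (i): the exact-sequence bookkeeping in part (ii) is routine, but transferring $\Gid$ between $R$ and $R/xR$ genuinely needs the dualizing-complex machinery. The two delicate points are verifying that $D\otimes_{R}^{\mathbf{L}}\overline{R}$ remains dualizing, which rests on the Bass-number/depth comparison, and checking the hypotheses of \cite[Theorem 5.5]{CFH} carefully enough that the derived base change $\overline{M}\simeq M\otimes_{R}^{\mathbf{L}}\overline{R}$ transfers finiteness in both directions at once.
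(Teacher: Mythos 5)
Your proposal is correct and follows essentially the same route as the paper's own proof: the same reduction to the complete case via \cite[Theorem 3.24]{CF} and \cite[Lemma 3.6]{RS}, the same dualizing-complex base-change argument through \cite[2.11]{AV} and \cite[Theorem 5.5]{CFH} for part (i), and the same induction with \cite[Theorem 3.12(ii)]{MZ}, \cite[Proposition 3.9]{CF} and \cite[Theorem 7.6(b)]{CF} for part (ii). Your added justifications (why the long exact sequence collapses, and why the $n=0$ case is immediate) only make explicit what the paper leaves to the reader.
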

\begin{lem}Suppose that $M$ is a Cohen-Macaulay $R$-module of dimension $n$ such that {\emph{$\Gfd_{R}\H_{\fm}^n(M)$}} is finite. Then \emph{$\Gfd_{R}M$} is finite.
\begin{proof}First notice that, in view of \cite[Theorem 4.27]{CF}, we have $\Gfd_{R}\H_{\fm}^d(M)=\Gfd_{\hat{R}}\H_{\fm\hat{R}}^d(\hat{M})$ and $\Gfd_{R}M=\Gfd_{\hat R}\hat M$. Therefore, without lose of generality, we can assume that $R$ is complete; and hence it is a homomorphic image of a Gorenstein local ring $(S,\fn)$ of dimension $d$. Now, in view of the local duality theorem \cite[Theorem 11.2.6]{MB}, we have \begin{equation}
\H_{\fm}^{n}(M)\cong\Hom_{R}(\Ext_{S}^{d-n}(M,S),\E(R/\fm)).\end{equation} Next, we notice that $M$ is a Cohen-Macaulay $S$-module of dimension $n$; and hence, by \cite[Theorem 3.3.10(c)(i)]{BH}, the $S$-module $\Ext_{S}^{d-n}(M,S)$ is Cohen-Macaulay of dimension $n$. So that it is a Cohen-Macaulay $R$-module. Therefore, again, we can use the local duality theorem and \cite[Theorem 3.3.10(c)(iii)]{BH} to obtain the following isomorphisms\begin{equation}\end{equation}\vspace{-1.13cm}
{\[\begin{array}{rl}
\H_{\fm}^{n}(\Ext_{S}^{d-n}(M,S))&\cong\Hom_{R}(\Ext_{S}^{d-n}(\Ext_{S}^{d-n}(M,S),S),\E(R/\fm))\\
&\cong\Hom_{R}(M,\E(R/\fm)).
\end{array}\]}

Now, by our assumption, (4.1) and \cite[Theorem 4.25]{CF}, we have $\Gid_{R}\Ext_{S}^{d-n}(M,S)<\infty$. Therefore, one can use (4.2), \cite[Theorem 4.16]{CF} and Lemma 4.1(ii) to see that $\Gfd_R M$ is finite.
 \end{proof}
\end{lem}

The following theorem, which is the main result of this section, provides a comparison between the Gorenstein flat dimensions of a relative Cohen-Macaulay  module and its non-zero local cohomology module.
\begin{thm}Let $n$ be a non-negative integer and let $M$ be an $R$-module such that $\emph{H}^{i}_\fa(M)=0$ for all $i\neq n$. Then the following statements hold true.
\begin{itemize}
\item[(i)]{ If \emph{$\Gfd_{R} M<\infty$}, then \emph{$\Gfd_{R}\H^{n}_\fa(M)\leq\Gfd_{R}M+n$.}}
\item[(ii)]{If \emph{$\Gfd_{R}\H^{n}_\fa(M)<\infty$}, then \emph{$\Gfd_R M<\infty$} whenever $M$ is Cohen-Macaulay.}
\end{itemize}
Furthermore, in \emph{(i)} equality holds whenever $M$ is finitely generated.
\end{thm}
\begin{proof}First notice that $\sum^n{\H_{\fa}^n(M)}\simeq \C_{\fa}(R)\otimes_{R}M$, where $\C_{\fa}(R)$ denotes the $\check{C}$ech
complex of $R$ with respect to a generator of $\fa$. Now, assume that $s:=\Gfd_{R}M$ is finite and that $X^{\bullet}$ is a Gorenstein flat resolution for $M$. Then there exists a quasiisomorphism  $X^{\bullet}\overset{\simeq}\longrightarrow M$. Hence, by \cite[Corollary 2.16]{CFH}, $\sum^n{\H_{\fa}^n(M)}\simeq \C_{\fa}(R)\otimes_{R}^{\mathbf{L}}X^{\bullet}$. Since $\C_{\fa}(R)\otimes_{R}^{\mathbf{L}}X^{\bullet}$ is a bounded complex of Gorenstein flat modules, we see that $\Gfd_{R}\H_{\fa}^n(M)$ is finite. Next, by \cite[Theorem 4.17]{CF}, $\Tor_i^R(E,M)=0$ for all $i>s$ and for all injective $R$-modules $E$. Hence, by Proposition 3.1(i), $\Tor_i^R(E,\H_{\fa}^n(M))=0$ for all $i>n+s$ and for all injective $R$-modules $E$. Therefore $\Gfd_{R}\H^{n}_\fa(M)\leq s+n.$

(ii). Suppose that $M$ is Cohen-Macaulay and that $\dim M=d$. Then, by \cite[Proposition 2.8]{MZ}, one can deduce that
\[ \H_{\fm}^i(\H_{\fa}^n(M))=\begin{cases}
       0 & \text{if $i\neq\dim M/\fa M $}\\
       \H^d_{\fm}(M) & \text{if $i=\dim M/\fa M.$}
       \end{cases} \]
Therefore, we can use (i) and Lemma 4.2 to see that $\Gfd_{R}M$ is finite.

For the final assertion, suppose that $M$ is finitely generated with $\Gfd_{R} M=s<\infty$. Then, by (i), $\Gfd_{R}\H^{n}_\fa(M)\leq s+n$. If $\Gfd_{R}\H^{n}_\fa(M)< s+n$, then, in view of \cite[Theorem 4.17]{CF}, we deduce that $\Tor^{R}_{s+n}(\E(k),\H^{n}_{\fa}{(M)})=0$. Hence, by Proposition 3.1(ii), one can see that $\Tor^{R}_{s}(\E(k),M)=0$ which is a contradiction by \cite[Theorem 2.4.5(b)]{CF0} and \cite[Proposition 4.24]{CF}. Therefore, $\Gfd_{R}\H^{n}_\fa(M)=\Gfd_{R}M+n.$
\end{proof}
An immediate consequence of the previous theorem is the following corollary.
\begin{cor}Let $M$ be a Cohen-Macaulay $R$-module of dimension $d$. Then \emph{$\Gfd_{R}\H^{d}_\fm(M)=\Gfd_{R}M+d.$}
\end{cor}

The following proposition is a Gorenstein projective version of Proposition 3.5.
\begin{prop}Assume that $R$ is Cohen-Macaulay and that $M$ is a $d$-dimensional finitely generated $R$-module of finite Gorenstein projective dimension. Then the following statements are equivalent.
\begin{itemize}
\item[(i)]{\emph{$M$ is Cohen-Macaulay. }}
\item[(ii)]{\emph{$\Gfd_{R}\H_{\fm}^d(M)=\Gfd_{R}M+d$}}.
\item[(iii)]{\emph{$\Gpd_{R}\H_{\fm}^d(M)=\Gpd_{R}M+d$}}.
\end{itemize}
\begin{proof}The implications (i)$\Rightarrow$(ii) follows from Corollary 4.4. (ii)$\Rightarrow$(iii) and (i): Since $R$ has finite Kurll dimension and $\Gfd_{R}\H_{\fm}^d(M$) is finite, we have the finitness of $\Gpd_{R}\H_{\fm}^d(M)$ by \cite[Theorem 3.4]{EST}. Hence, by \cite[Corollary 2.4]{EMX}, $\Gpd_{R}\H_{\fm}^d(M)\leq\dim R$. Therefore, in view of \cite[Theorem 4.23]{CF}, we get the following inequalities $$\Gpd_{R}M+d=\Gfd_{R}\H_{\fm}^d(M)\leq\Gpd_{R}\H_{\fm}^d(M)\leq\dim R.$$
Now, one can use \cite[Proposition 2.16 and Theorem 1.25]{CF} to see that $\Gfd_{R}\H_{\fm}^d(M)=\Gpd_{R}\H_{\fm}^d(M)$ and that $\depth M=\dim M$. Thus, $M$ is Cohen-Macaulay and (iii) holds true.

(iii)$\Rightarrow$(ii): First, we notice that, by \cite[Theorem 4.27]{CF}, $\Gfd_{R}\H_{\fm}^d(M)=\Gfd_{\hat{R}}\H_{\fm\hat{R}}^d(\hat{M})$ and that, in view of \cite[propositions 4.23 and 2.20]{CF}, the following inequalities hold: { \[\begin{array}{rl}
\Gfd_{\hat R}\H_{\fm\hat{R}}^d(\hat{M}) &\leq\Gpd_{\hat{R}}\H_{\fm\hat{R}}^d(\hat{M})\\
&\leq\Gpd_{R}\H_{\fm}^d({M}).
\end{array}\]}

 Now, since $\H_{\fm}^d(M)$ is an Artinian $\hat{R}$-module, one can use \cite[Theorem 4.16]{CF} to see that the finitely generated $\hat{R}$-module $\Hom_{{R}}(\H_{\fm}^d(M),\E(R/\fm))$ is of finite Gorenstein injective dimension. Therefore, by \cite[Theorem 3.24]{CF} and \cite[Theorem 4.16]{CF}, $\Gfd_{R}\H_{\fm}^d(M)=\Gid_{\hat{R}}\Hom_{{R}}(\H_{\fm}^d(M),\E(R/\fm))=\dim R$. Hence, one can use \cite[Corollary 2.4]{EMX} and above inequalities to complete the proof.

\end{proof}
\end{prop}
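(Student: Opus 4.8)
The plan is to run the cycle (i)$\Rightarrow$(ii), then (ii)$\Rightarrow$(i) and (iii) together, and finally (iii)$\Rightarrow$(ii), anchoring everything on Theorem 4.2 and on the Auslander--Buchsbaum formula for Gorenstein projective dimension. The implication (i)$\Rightarrow$(ii) is immediate: when $M$ is Cohen--Macaulay of dimension $d$, Theorem 4.2 (with $n=d$) simultaneously yields the finiteness of $\Gfd_R\H^d_\fm(M)$ and the equality $\Gfd_R\H^d_\fm(M)=\Gpd_R M+d$. Note that $\H^d_\fm(M)\neq0$ by Grothendieck non-vanishing, so the displayed invariants are meaningful throughout.

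For (ii)$\Rightarrow$(i) and (iii), I would first produce a bounding chain. From (ii) the quantity $\Gfd_R\H^d_\fm(M)$ is finite; since $R$ has finite Krull dimension this forces $\Gpd_R\H^d_\fm(M)<\infty$ by \cite{EST}, with $\Gpd_R\H^d_\fm(M)\le\dim R$ by \cite{EMX}. Feeding in the general inequality $\Gfd\le\Gpd$ from \cite{CF} gives
$$\Gpd_R M+d=\Gfd_R\H^d_\fm(M)\le\Gpd_R\H^d_\fm(M)\le\dim R.$$
Then I would apply the Auslander--Buchsbaum equality $\Gpd_R M=\depth R-\depth M$, together with $\depth R=\dim R$ (as $R$ is Cohen--Macaulay), to rewrite the left-hand end as $\dim R-\depth M+d$. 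The chain collapses to $\depth M\ge d=\dim M$, whence $\depth M=\dim M$ and $M$ is Cohen--Macaulay; since every inequality has now become an equality, the identity $\Gpd_R\H^d_\fm(M)=\Gpd_R M+d$ of (iii) drops out for free.

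For (iii)$\Rightarrow$(ii) the target is the single equality $\Gfd_R\H^d_\fm(M)=\Gpd_R\H^d_\fm(M)$, after which (iii) converts directly into (ii). My approach is to pin the flat invariant down to an exact value. After passing to the completion (which changes none of the Gorenstein dimensions of the Artinian module $\H^d_\fm(M)$, by \cite{CF}), the Matlis dual $\Hom_R(\H^d_\fm(M),\E(R/\fm))$ is a finitely generated $\hat R$-module of finite Gorenstein injective dimension; the flat--injective duality of \cite{CF} together with the Bass-type formula then gives $\Gfd_R\H^d_\fm(M)=\depth\hat R=\dim R$. Combining this with $\Gpd_R\H^d_\fm(M)\le\dim R$ and $\Gfd\le\Gpd$ sandwiches all the invariants at $\dim R$, forcing the desired equality.

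The step I expect to be the main obstacle is this last exact evaluation $\Gfd_R\H^d_\fm(M)=\dim R$. One must use carefully that $\H^d_\fm(M)$ is Artinian, so that its Matlis dual is finitely generated over $\hat R$ and the Gorenstein-injective Bass formula genuinely applies, and that completion leaves all the relevant dimensions invariant. Once that value is secured, the surrounding arguments are only sandwichings between $\dim R$ and the Auslander--Buchsbaum value, and are routine.
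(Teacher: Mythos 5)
Your proposal is correct and takes essentially the same route as the paper: (i)$\Rightarrow$(ii) via Theorem 4.2, (ii)$\Rightarrow$(i),(iii) via the sandwich $\Gpd_{R}M+d=\Gfd_{R}\H_{\fm}^d(M)\leq\Gpd_{R}\H_{\fm}^d(M)\leq\dim R$ collapsed by the Auslander--Bridger formula $\Gpd_{R}M=\depth R-\depth M$ (what you call the Auslander--Buchsbaum equality; the paper cites \cite[Proposition 2.16 and Theorem 1.25]{CF} for it), and (iii)$\Rightarrow$(ii) by pinning down $\Gfd_{R}\H_{\fm}^d(M)=\dim R$ through completion, Matlis duality of the Artinian module, and the Bass-type formula for Gorenstein injective dimension, then sandwiching. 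The only cosmetic difference is that the paper phrases the completion step as the inequality $\Gfd_{\hat R}\H_{\fm\hat{R}}^d(\hat{M})\leq\Gpd_{\hat{R}}\H_{\fm\hat{R}}^d(\hat{M})\leq\Gpd_{R}\H_{\fm}^d({M})$ rather than an invariance claim, which is all that is actually needed.
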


Next, we single out a certain case of Proposition 4.5. Notice that the proof of the following corollary is similar to the proof of Proposition 4.5(ii)$\Rightarrow$(i).
\begin{cor}Suppose that \emph{$\dim R=d$}. Then the following statements are equivalent.
\begin{itemize}
\item[(i)]{\emph{$R$ is Cohen-Macaulay.}}
\item[(ii)]{\emph{$\Gfd_{R}\H_{\fm}^d(R)=d$}}.
\end{itemize}
\end{cor}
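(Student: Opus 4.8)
The plan is to read this corollary off from Theorem~4.2 in one direction and to imitate the argument of 4.3(ii)$\Rightarrow$(i) in the other, specialized throughout to the module $M=R$.

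For (i)$\Rightarrow$(ii), if $R$ is Cohen-Macaulay then $R$ is a Cohen-Macaulay $R$-module of dimension $d$, and being free it satisfies $\Gpd_{R}R=0$. Applying 4.2 with $M=R$ and $n=d$ then gives $\Gfd_{R}\H_{\fm}^{d}(R)=\Gpd_{R}R+d=d$, which is (ii).

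For (ii)$\Rightarrow$(i) the point to watch is that one cannot simply quote 4.2, since that theorem presupposes the Cohen-Macaulayness of the module in question---here $R$ itself---which is exactly what we are trying to establish. Instead I would follow the scheme of 4.3(ii)$\Rightarrow$(i). Starting from the hypothesis $\Gfd_{R}\H_{\fm}^{d}(R)=d<\infty$, the finiteness of $\dim R$ lets me invoke \cite[Theorem 3.4]{EST} to get $\Gpd_{R}\H_{\fm}^{d}(R)<\infty$ and then \cite[Corollary 2.4]{EMX} to bound $\Gpd_{R}\H_{\fm}^{d}(R)\leq\dim R=d$. Combining this with the inequality $\Gfd_{R}\H_{\fm}^{d}(R)\leq\Gpd_{R}\H_{\fm}^{d}(R)$ from \cite[Theorem 4.23]{CF} yields
$$d=\Gfd_{R}\H_{\fm}^{d}(R)\leq\Gpd_{R}\H_{\fm}^{d}(R)\leq d,$$
so all three quantities coincide and equal $d$.

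The crux is then to convert this numerical information into a statement about $\depth R$ without presupposing that $R$ is Cohen-Macaulay. Using \cite[Theorem 4.27]{CF} to reduce to the case where $R$ is complete, the Artinian module $\H_{\fm}^{d}(R)$ has finitely generated Matlis dual $\Hom_{R}(\H_{\fm}^{d}(R),\E(R/\fm))$, and \cite[Theorems 3.24 and 4.16]{CF} identify $\Gfd_{R}\H_{\fm}^{d}(R)$ with $\Gid_{R}\Hom_{R}(\H_{\fm}^{d}(R),\E(R/\fm))$; the Gorenstein Bass formula \cite[Theorem 1.25]{CF} then evaluates the latter as $\depth R$. Hence $\depth R=d=\dim R$, i.e.\ $R$ is Cohen-Macaulay. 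I expect this final transition---extracting $\depth R=\dim R$ from the Matlis-duality identity together with the Bass formula, while carefully avoiding any hidden appeal to 4.2---to be the main obstacle; the remaining steps are routine bookkeeping with the dimension inequalities.
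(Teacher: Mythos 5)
Your proof is correct, and it is in substance the argument the paper intends: the paper gives no written proof of this corollary, only the remark that it is ``similar to the proof of 4.3(ii)$\Rightarrow$(i)''. In fact your write-up supplies the step that this pointer glosses over. Specializing 4.3(ii)$\Rightarrow$(i) literally to $M=R$ proves nothing: there the conclusion $\depth M=\dim M$ is extracted from the Auslander--Buchsbaum-type formula $\Gpd_{R}M=\depth R-\depth_{R}M$ \emph{together with the standing hypothesis that $R$ is Cohen--Macaulay}, and for $M=R$ this degenerates to $0=0$ while the Cohen--Macaulayness of $R$ is exactly what is to be proved. Your patch --- pass to $\hat{R}$ via \cite[Theorem 4.27]{CF}, Matlis-dualize the Artinian module $\H_{\fm}^{d}(R)$ into a finitely generated $\hat{R}$-module of finite Gorenstein injective dimension, and conclude $\Gfd_{R}\H_{\fm}^{d}(R)=\Gid_{\hat{R}}\Hom_{R}(\H_{\fm}^{d}(R),\E(R/\fm))=\depth \hat{R}=\depth R$ --- is precisely the mechanism the paper itself uses in the last paragraph of 4.2 and in 4.3(iii)$\Rightarrow$(ii), so this is the intended route. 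Three small remarks: (1) in the paper's usage of \cite{CF}, the Bass-type formula $\Gid=\depth R$ for finitely generated modules is Theorem 3.24, while Theorem 1.25 is the Auslander--Buchsbaum-type formula for $\Gpd$, so your final citation should be to 3.24 rather than 1.25 (you already cite 3.24 in the identification step, so nothing is at stake mathematically); (2) your intermediate chain through \cite[Theorem 3.4]{EST}, \cite[Corollary 2.4]{EMX} and \cite[Theorem 4.23]{CF}, while correct, is never used afterwards --- the duality argument alone closes the implication; (3) one should note that $\H_{\fm}^{d}(R)\neq 0$ by Grothendieck's non-vanishing theorem (as $\dim R=d$), so that its Matlis dual is a non-zero finitely generated module to which the Bass formula applies.
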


The following proposition is a generalization of Theorem 4.3 in terms of $G_{C}$-dimensions.
\begin{prop}Let $n$ be a non-negative integer, $C$ be a semidualizing $R$-module and let $M$ be an $R$-module such that $\emph{H}^{i}_\fa(M)=0$ for all $i\neq n$. Then the following statements hold true.
\begin{itemize}
\item[(i)]{ If \emph{$\GCfd_{R} M<\infty$}, then \emph{$\GCfd_{R}\H^{n}_\fa(M)\leq\GCfd_{R}M+n$.}}
\item[(ii)]{If \emph{$\GCfd_{R}\H^{n}_\fa(M)<\infty$}, then \emph{$\GCfd_R M<\infty$} whenever $M$ is Cohen-Macaulay.}
\end{itemize}
Furthermore, in \emph{(i)} the equality holds whenever $M$ is finitely generated.
\begin{proof} First, we notice that, in view of \cite[Theorem 4.2.1]{MB}, $\H_{\fa}^i(M)\cong\H_{\fa\oplus C}^i(M)$ for all $i$. On the other hand, by using \cite[Theorem 2.16]{HJ}, we have $\GCpd_{R}M=\Gpd_{R\ltimes C}M$ and $\GCfd_{R}\H_{\fa}^n(M)=\Gfd_{R\ltimes C}\H_{\fa}^n(M)=\Gfd_{R\ltimes C}\H_{\fa\oplus C}^n(M)$. Hence, by replacing $R$ with $R\ltimes C$, one can use Theorem 4.3 to complete the proof.

\end{proof}
\end{prop}
The following corollary is a consequence of the pervious proposition and Proposition 4.5.
\begin{cor}Let $R$ be Cohen-Macaulay, $C$ be a semidualizing $R$-module and let $M$ be a $d$-dimensional finitely generated $R$-module of finite $G_{C}$-projective dimension. Then the following statements are equivalent.
\begin{itemize}
\item[(i)]{\emph{$M$ is Cohen-Macaulay.}}
\item[(ii)]{\emph{$\GCfd_{R}\H_{\fm}^d(M)=\GCfd_{R}M+d$}}.
\item[(iii)]{\emph{$\GCpd_{R}\H_{\fm}^d(M)=\GCpd_{R}M+d$}}.
\end{itemize}
\begin{proof}We notice that, by using \cite[Exercise 1.2.26]{BH} and \cite[Theorem 2.2.6]{W}, one can deduce that $(R\ltimes C, \fm\oplus C)$ is a Cohen-Macaulay local ring. Also, $M$ is a Cohen-Macaulay $R$-module if and only if $M$ is a Cohen-Macaulay $R\ltimes C$-module. Therefore, the assertion follows from Proposition 4.5 and Proposition 4.7.
\end{proof}
\end{cor}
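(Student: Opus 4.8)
The plan is to deduce this corollary from 4.3 by passing to the trivial extension ring $A:=R\ltimes C$, in the same spirit as the proof of 4.5. By \cite[Exercise 1.2.26]{BH} and \cite[Theorem 2.2.6]{W}, together with 2.5(iii), the pair $(A,\fm\oplus C)$ is a Cohen-Macaulay local ring. The translation between the two settings is provided by \cite[Theorem 2.16]{HJ}: for any $R$-module $N$, regarded as an $A$-module via the natural surjection $A\to R$, one has $\GCpd_{R}N=\Gpd_{A}N$ and $\GCfd_{R}N=\Gfd_{A}N$. In particular the hypothesis $\GCpd_{R}M<\infty$ becomes $\Gpd_{A}M<\infty$.

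Next I would identify the local cohomology modules. As in the proof of 4.5, \cite[Theorem 4.2.1]{MB} yields $\H_{\fm}^{i}(M)\cong\H_{\fm\oplus C}^{i}(M)$ for all $i$. Reading off the least and the greatest index for which these modules are non-zero, this gives $\depth_{A}M=\depth_{R}M$ and $\dim_{A}M=\dim_{R}M=d$; hence $M$ is a $d$-dimensional finitely generated $A$-module, and $M$ is Cohen-Macaulay over $A$ if and only if it is Cohen-Macaulay over $R$. Thus $M$ satisfies, over the Cohen-Macaulay local ring $A$, all the hypotheses of 4.3.

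It then remains to match the three conditions. Under the dictionary above, condition (i) of the corollary is condition (i) of 4.3 for $A$; condition (ii) becomes $\Gfd_{A}\H_{\fm\oplus C}^{d}(M)=\Gpd_{A}M+d$, which is condition (ii) of 4.3; and condition (iii) becomes $\Gpd_{A}\H_{\fm\oplus C}^{d}(M)=\Gpd_{A}M+d$, which is condition (iii) of 4.3. Invoking 4.3 over $A$ therefore yields the desired equivalence (i)$\Leftrightarrow$(ii)$\Leftrightarrow$(iii).

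The only delicate point is the bookkeeping: one must be sure that each of the three quantities $\GCpd_{R}M$, $\GCfd_{R}\H_{\fm}^{d}(M)$ and $\GCpd_{R}\H_{\fm}^{d}(M)$ is transported simultaneously and correctly through the two identifications $A=R\ltimes C$ and $\H_{\fm}^{d}(M)\cong\H_{\fm\oplus C}^{d}(M)$, so that an equality over $R$ is literally the corresponding equality over $A$. Once this is checked there is no further homological content, and the result follows at once from 4.3.
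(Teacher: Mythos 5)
Your proposal is correct and follows essentially the same route as the paper: the paper's own proof also passes to the trivial extension $(R\ltimes C,\fm\oplus C)$, notes it is Cohen--Macaulay local and that Cohen--Macaulayness of $M$ is preserved, and then invokes 4.3 together with the dictionary ($\GCpd_{R}=\Gpd_{R\ltimes C}$, $\GCfd_{R}=\Gfd_{R\ltimes C}$, $\H_{\fm}^{i}(M)\cong\H_{\fm\oplus C}^{i}(M)$) already set up in the proof of 4.5. Your write-up merely makes the bookkeeping explicit where the paper compresses it into ``follows from 4.3 and 4.5.''
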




\end{document}